\newcommand{\F}{\mathbb{F}}
\newcommand{\N}{\mathbb{N}}
\newcommand{\bA}{\mathbf{\alpha}}
\newcommand{\bB}{\mathbf{\beta}}
\newcommand{\cS}{\mathcal{S}}
\newcommand{\cP}{\mathcal{P}}
\renewcommand{\S}{\mathcal{S}}
\newtheorem{theorem}{Theorem}
\newtheorem{corollary}{Corollary}
\newtheorem{lemma}{Lemma}
\newtheorem{definition}{Definition}
\newtheorem{prop}[lemma]{Proposition}
\newtheorem{problem}{Problem}
\theoremstyle{remark}
\newtheorem*{remark}{Remark}
\newtheorem*{example}{Example}
\title[Expansion complexity of sequences]{Algebraic dependence in generating functions and expansion complexity}
\author{Domingo G\'omez-P\'erez}
\address{D.G.-P.: Department of Mathematics, University of Cantabria, Santander 39005, Spain}
\email{domingo.gomez@unican.es}
\author{L\'aszl\'o M\'erai}
\address{L.M. Johann Radon Institute for Computational and Applied Mathematics, Austrian Academy of Sciences,  Altenberger Stra\ss e 69, A-4040 Linz, Austria} 
\email{laszlo.merai@oeaw.ac.at}
\subjclass{11T71, 11Y16,  94A60, 94A55, 68Q25}
\begin{document}

\begin{abstract}
In 2012, Diem introduced a new figure of merit for cryptographic sequences
called expansion complexity. Recently, a series of paper has been published for analysis of expansion complexity and for testing sequences in terms of this new measure of randomness.  In this paper, we continue this analysis. First we study the expansion complexity in terms of the Gr\"obner basis of the underlying polynomial ideal. Next, we prove bounds on the expansion complexity for random sequences. Finally, we study the expansion complexity of sequences defined by differential equations, including the inversive generator.
\end{abstract}

\keywords{pseudorandom sequence, expansion complexity,  Gr\"obner basis, inversive generator}

\maketitle

\section{Introduction}
For a sequence $\cS=(s_n)_{n=0}^\infty$ over the finite field $\F_q$ of
$q$ elements, we define its \emph{generating function} $G(x)$ of $\cS$
by
\begin{equation*}
  G(x) = \sum_{n=0}^{\infty}s_nx^n,
\end{equation*}
viewed as a formal power series over $\F_q$.

A sequence $\cS$ is called \emph{expansion sequence} or \emph{automatic sequence} if its
generating function satisfies an algebraic equation
\begin{equation}\label{eq:h}
  h(x,G(x))=0
\end{equation}
for some nonzero polynomial $h(x,y)\in\F_q[x,y]$. Clearly, the polynomials
$h(x,y)\in\F_q[x,y]$ satisfying \eqref{eq:h} form an ideal in
$\F_q[x,y]$. This ideal is called the \emph{defining ideal} and it is
a principal ideal generated by an irreducible polynomial, see
\cite[Proposition~4]{di12}.

Expansion sequences  can be efficiently computed from a relatively
short subsequence via the generating polynomial of its defining
ideal~\cite[Section~5]{di12}.

\begin{prop}\label{prop:expansion_seq}
Let $\cS$ be an expansion sequence and let $h(x,y)$ be the generating
polynomial of its defining ideal.
The sequence $\cS$ is uniquely determined by $h(x,y)$ and its initial
sequence of length $(\deg h)^2$.
Moreover, $h(x,y)$ can be computed in polynomial time (in $\log q
\cdot \deg h $) from an initial sequence of length $(\deg h)^2$.
\end{prop}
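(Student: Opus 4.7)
My plan is to split the proposition into two claims. For the first (that $\cS$ is determined by $h$ and an initial segment of length $d^2$, where $d = \deg h$), I will assume $h$ is separable in $y$; the inseparable case forces $G$ to be rational in $x$ and can be handled separately. Set $v_0 = \mathrm{ord}_x\bigl((\partial h/\partial y)(x, G(x))\bigr)$ and consider $R_1 = \mathrm{Res}_y(h, \partial h/\partial y) \in \F_q[x]$, which is nonzero because $h$ is irreducible and $\partial h/\partial y$ has strictly smaller $y$-degree. A Sylvester degree bound will give $\deg R_1 < d^2$, and writing $R_1 = u_1 h + v_1 (\partial h/\partial y)$ and substituting $y = G(x)$ yields $v_0 \leq \mathrm{ord}(R_1) \leq \deg R_1 < d^2$. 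I then view $[x^{n+v_0}] h(x, G(x)) = 0$ as a polynomial relation in the $s_m$: by the chain rule, the $s_{n+k}$-coefficient equals $[x^{v_0-k}] (\partial h/\partial y)(x, G(x))$, which is the nonzero leading coefficient for $k = 0$ and vanishes for $1 \leq k \leq v_0$. Hence the equation is linear in $s_n$ alone (among unknowns of index $\geq n$) with an invertible coefficient, and determines $s_n$ from $s_0, \dots, s_{n-1}$; since $v_0 < d^2$, the initial segment of length $d^2$ suffices to start and sustain this recursion.

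For the second claim (efficient recovery of $h$), I introduce unknowns $c_{ij}$ for $i + j \leq d$ and impose the homogeneous linear system $[x^n]\bigl(\sum_{i,j} c_{ij} x^i G_L(x)^j\bigr) = 0$ for $n = 0, \dots, L-1$, where $L = d^2$ and $G_L = \sum_{k < L} s_k x^k$. The coefficients are polynomial expressions in the $s_k$ and are computable in time polynomial in $d$ and $\log q$, so Gaussian elimination produces the solution space in the same running time. The defining polynomial $h^*$ is a nontrivial solution, so the substantive step is to show the solution space is one-dimensional.

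Suppose then that $h$ is another solution of total degree $\leq d$ not proportional to $h^*$. If $\deg_y h = 0$, then $h$ is a polynomial in $x$ alone and $\deg h \leq d < L$ combined with $h \equiv 0 \pmod{x^L}$ forces $h = 0$. Otherwise $\deg_y h \geq 1$; by irreducibility of $h^*$ and $\deg h \leq \deg h^*$, the polynomials $h$ and $h^*$ are coprime in $\F_q(x)[y]$, so $R = \mathrm{Res}_y(h, h^*) \in \F_q[x]$ is nonzero. The Sylvester bound $\deg R \leq \deg_y h^* \cdot \deg_x h + \deg_y h \cdot \deg_x h^*$, maximized subject to total degree $\leq d$ and $\deg_y h, \deg_y h^* \geq 1$, yields $\deg R \leq d^2 - d < d^2$. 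Writing $R = uh + v h^*$ in $\F_q[x, y]$ and substituting $y = G(x)$ gives $R(x) = u(x, G(x)) \cdot h(x, G(x))$, and because $G \equiv G_L \pmod{x^L}$ propagates $h(x, G_L) \equiv 0 \pmod{x^L}$ to $h(x, G) \equiv 0 \pmod{x^L}$, we obtain $R \equiv 0 \pmod{x^{d^2}}$, whence $R = 0$, a contradiction. I expect the main obstacle to be nailing down the strict inequality $\deg R < d^2$ via the case analysis on the $y$-degrees sketched above, together with careful handling of the inseparable and very low-degree corner cases.
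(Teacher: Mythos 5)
A preliminary remark: the paper does not prove Proposition~\ref{prop:expansion_seq}; it is quoted from Diem \cite[Section~5]{di12}, so there is no in-paper argument to compare against and your proposal must stand on its own. Your first part does stand: $v_0=\mathrm{ord}_x\bigl((\partial h/\partial y)(x,G(x))\bigr)$ is finite because $h\nmid \partial h/\partial y$, the total-degree Sylvester bound (using $\deg_x$ of the coefficient of $y^i$ being at most $d-i$) gives $\deg \mathrm{Res}_y(h,\partial h/\partial y)\le d(d-1)<d^2$, and for $n\ge v_0+1$ the coefficient of $x^{n+v_0}$ in $h(x,G(x))=0$ is affine-linear in $s_n,\dots,s_{n+v_0}$ with invertible $s_n$-coefficient $[x^{v_0}]\,(\partial h/\partial y)(x,G)$, so $s_0,\dots,s_{v_0}$ determine the rest. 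One correction: the inseparable case is not one where ``$G$ is rational''; it is vacuous. If $\partial h/\partial y=0$ then $h=\sum_j a_j(x)y^{pj}$, and since $G^p\in\F_q[[x^p]]$, splitting $h(x,G)=0$ by residue classes of $x$-exponents modulo $p$ and taking $p$-th roots of coefficients yields a nonzero annihilator of $G$ of strictly smaller $y$-degree, contradicting that $h$ generates the defining ideal.

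The second part has a genuine gap, and it sits exactly where you predicted. The asserted bound $\deg \mathrm{Res}_y(h,h^*)\le d^2-d$ does not follow from ``total degree $\le d$'': that hypothesis does not bound $\deg_x+\deg_y$ by $d$ (consider $x^d+y^d$), and the correct total-degree bound is only $\deg_x\mathrm{Res}_y(f,g)\le \deg f\cdot\deg g\le d^2$, which is not strict enough to force $R=0$ from $R\equiv 0\bmod x^{d^2}$. Worse, the one-dimensionality of the solution space is simply false. Take $h^*=y-x^2$, so $G(x)=x^2$, $d=2$, $L=d^2=4$, $\cS=(0,0,1,0,0,\dots)$. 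Then $f=y^2$ has total degree $2$, is coprime to $h^*$, and $f(x,G(x))=x^4\equiv 0\bmod x^4$; correspondingly $\mathrm{Res}_y(y^2,\,y-x^2)=x^4$ has degree and order both exactly $d^2$, so your intended contradiction never materializes. The solution space of your linear system contains $y-x^2$, $y^2$, and in particular the \emph{irreducible} degree-$2$ polynomial $y^2-y+x^2$, whose own expansion sequence also begins $0,0,1,0$; hence no selection rule inside that solution space can recover $h^*$ from $d^2$ terms, and the recovery claim needs either one additional term (so that $\mathrm{ord}_x f(x,G)$ strictly exceeds the B\'ezout number $\deg f\cdot\deg h$) or the sharper divisibility statement of \cite[Lemma~6]{di12} (Lemma~\ref{lemma:Diem} here), whose boundary case $\mathrm{ord}=d\cdot\deg f$ is precisely the one this example exhibits.
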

Based on Proposition \ref{prop:expansion_seq}, Diem~\cite{di12}
defined the $N$th expansion complexity in the following way.
For a positive integer $N$, the {\em $N$th expansion complexity}
$E_N=E_N(\cS)$ is $E_N=0$ if $s_0=\ldots=s_{N-1}=0$ and otherwise
the least total degree of a nonzero polynomial $h(x,y)\in \F_q[x,y]$ with
\begin{equation}\label{eqcon}
 h(x,G(x))\equiv 0 \mod x^N.
\end{equation}

For recent results on expansion complexity we refer to \cite{merai2018expansion,merai2017expansion}.
For example, it was pointed out in \cite{merai2018expansion}, that small expansion complexity does not imply high predictability in the sense of Proposition~\ref{prop:expansion_seq}.

\begin{example}\label{ex:1}
Let $\cS$ be a sequence  over the finite field $\F_p$ ($p\geq 3$) with
initial segment $\cS=000001\dots$ and generating function $G(x)\equiv
x^5 \mod x^6$. Then its 6th expansion complexity is $E_6(\cS)=2$
realized by the polynomial $h(x,y)=x\cdot y$. However, the first 4
elements do not determine the whole initial segment with length 6.
\end{example}

In order to achieve the predictability of sequences in terms of
Proposition~\ref{prop:expansion_seq}, one needs to require that the
polynomial $h(x,y)$ satisfying \eqref{eqcon} is
\emph{irreducible}. This observation leads to  the
\emph{i(rreducible)-expansion complexity} of a sequence. Accordingly,
for a positive integer $N$, the {\em $N$th i-expansion complexity}
$E^*_N=E^*_N(\cS)$ is $E_N^*=0$ if $s_0=\ldots=s_{N-1}=0$ and
otherwise the least total degree
of an  irreducible polynomial $h(x,y)\in \F_q[x,y]$ with
\eqref{eqcon}.

See \cite{merai2018expansion} for more details for expansion and i-expansion complexity.

\bigskip

In this paper we first give bounds on the expansion and i-expansion complexity in terms of the Gr\"obner basis of the ideal of polynomials~\eqref{eqcon} in Section~\ref{sec:groebner}. 
In Section~\ref{sec:prob} we study the typical value of expansion complexity for random sequences. Finally, in Section~\ref{sec:seq} we study the expansion complexity of sequences defined by differential equations. An example of such a sequence is the so-called explicit inversive generator.

\section{Expansion complexity and Gr\"obner bases}\label{sec:groebner}

In this section we determine the expansion and i-expansion complexity of a sequence in terms of the Gr\"obner basis of its defining ideal.

\subsection{A brief introduction to Gr\"obner bases}
In the following section, we give a brief introduction of Gr\"obner bases
with special emphasis in properties. For a more complete introduction, we
recommend to consult the introductory books of Eisenbud~\cite{Eis} and zur Gathen~\cite{von2013modern}. 
In this section we focus only on polynomials with 2 variables and recall the basic notion just for this special case.

For vectors of integer components $\bA = (\alpha_1,\alpha_2)$ define $|\bA|=\alpha_1+\alpha_2$. 
The \emph{graded lexicographical ordering}, denoted by $<_{grlex}$, is defined as $\bA
<_{grlex} \bB $ for vectors  $\bA = (\alpha_1,\alpha_2)$ and
$\bB = (\beta_1,\beta_2)$ if $|\bA|<|\bB|$ or  $|\bA|=|\bB|$ and $\alpha_2<\beta_2$.

We will use the following notation: Let  $C=\sum_{\alpha_1,\alpha_2}c_{\alpha_1,\alpha_2}x^{\alpha_1}y^{\alpha_2}$  be a nonzero
polynomial with each $c_{\alpha_1,\alpha_2}\not= 0$ and $I \subset \F_{q}[x,y]$.
Then,
\begin{enumerate}[label=(\alph*)]
\item $ LE(C)=leadexp(C)$ is the largest exponent vector $\alpha$ in
  $C$ with respect to $<_{grlex}$.

\item $LM(C)$  denotes the leading monomial of  $C$  so
  if $LE(C)=(\alpha_1,\alpha_2)$, then $LM(C)=x^{\alpha_1}y^{\alpha_2}$.

\item $LC(C)$  denotes the coefficient  of $LM(C)$.
  In other words, the so called leading term of $C$  is
  $LC(C)LM(C)$.

\item $LE(I)= \left\{ LE(C) \ | \ 0 \neq C \in I \right\}
  \subseteq \N_0^2$. (Note that if $I=\{0\}$, then $LE(I)=\emptyset $.)

\item $LM(I)= \left\{ LM(C) \ | \ 0 \neq C \in I \right\} =
  \left\{x^{\alpha_1}y^{\alpha_2} \ | \ (\alpha_1,\alpha_2) \in LE(I)\right\}$.
  (If $I=\{0\}$, then $LM(I)=\emptyset$.)

\item\label{item:linear} For $C(x,y)\in\F_q[x,y]$ with $|LE(C)| \geq 2$  and $a,b\in\F_q$ we have $LC(C(x,y))=LC(C(x,y+ax+b))$ and $LM(C(x,y))=LM(C(x,y+ax+b))$ with respect to $<_{grlex}$.
\end{enumerate}

\begin{definition}Let $\cP= \{P_1, \ldots ,P_\ell\}\subset\F_q[x,y]$ and write $I=\langle P_1, \ldots ,P_\ell \rangle$. $\cP$ is a \emph{Gr\"obner basis} for $I$ with respect to $<_{grlex}$ if $\langle LM(P_1), \ldots , LM(P_{\ell})\rangle = \langle LM\left(I\right)\rangle$. If $LC(P_i)=1$ for $ i=1, \ldots, \ell$ and $LM(P_i)$ does not divide any term of  $P_j$  for   $i\neq j$,  then   $\cP$ is a \emph{reduced Gr\"obner basis} for $I$  with respect to  $<_{grlex}$.
\end{definition}
It is known that for any ideal $I$, there exists  $\{P_1,\ldots, P_{\ell}\}$ 
that is a reduced Gr\"obner basis with respect to $<_{grlex}$ and this basis is unique,
apart from permutations of the elements.

The following corollary directly follows from Property \ref{item:linear}.

\begin{corollary}\label{cor:linear}
 Let $\cP=\{P_1(x,y),\ldots, P_{\ell}(x,y)\}$ be a reduced Gr\"obner basis for $\langle \cP \rangle$ with respect to $<_{grlex}$. If $|LE
 (P_i)| \geq 2$  for all $i=1,\dots, \ell$, then for any $a, b\in\F_q$, $\cP'=\{P_1(x,y+ax+b),\ldots, P_{\ell}(x,y+ax+b)\}$ is a reduced Gr\"obner basis for $\langle \cP' \rangle$. 
\end{corollary}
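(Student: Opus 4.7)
The plan is to exploit that the substitution map $\phi: \F_q[x,y] \to \F_q[x,y]$ defined by $\phi(f)(x,y) = f(x, y+ax+b)$ is an $\F_q$-algebra automorphism (its inverse being $y \mapsto y - ax - b$), combined with Property \ref{item:linear}, which tells us that $\phi$ preserves both the leading monomial and the leading coefficient of any polynomial of total degree at least $2$.

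First I would note that since $\phi$ is a ring automorphism, it sends ideals to ideals, so
\[
  \langle \cP' \rangle = \langle \phi(P_1), \ldots, \phi(P_\ell) \rangle = \phi(\langle \cP \rangle).
\]
Next I would check that the hypothesis $|LE(P_i)| \geq 2$ propagates to the whole ideal: every nonzero $C \in \langle \cP \rangle$ satisfies $|LE(C)| \geq 2$, because the Gr\"obner basis property of $\cP$ forces some $LM(P_i)$ to divide $LM(C)$, whence $|LE(C)| \geq |LE(P_i)| \geq 2$. This is the key step that makes Property \ref{item:linear} applicable to \emph{every} member of the ideal, not only to the given generators.

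Combining these two observations yields the Gr\"obner basis property of $\cP'$ at once. Any nonzero $C' \in \langle \cP' \rangle$ has the form $\phi(C)$ for some nonzero $C \in \langle \cP \rangle$; since $|LE(C)| \geq 2$, Property \ref{item:linear} gives $LM(C') = LM(C)$. Applied to the generators, $LM(P_i') = LM(P_i)$ and $LC(P_i') = LC(P_i) = 1$, and together these give
\[
  \langle LM(\langle \cP' \rangle) \rangle = \langle LM(\langle \cP \rangle) \rangle = \langle LM(P_1), \ldots, LM(P_\ell) \rangle = \langle LM(P_1'), \ldots, LM(P_\ell') \rangle,
\]
so $\cP'$ is a Gr\"obner basis with unit leading coefficients.

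The main obstacle I foresee is the remaining reducedness condition: that $LM(P_i')$ does not divide any term of $P_j'$ for $i \neq j$. Unlike the leading monomial identity, this is not immediate from Property \ref{item:linear}, because expanding $(y+ax+b)^k$ inside $P_j$ generically creates new monomials that did not appear in $P_j$, any of which could in principle be divisible by some $LM(P_i)$. To control this I would use that every new monomial produced by the substitution has strictly smaller $y$-exponent than, and total degree at most that of, the parent monomial from which it arises, and combine this with the reducedness of the original basis $\cP$ together with the tie-breaking rule of $<_{grlex}$ (which favours larger $y$-exponent) to rule out the unwanted divisibilities by a term-by-term comparison. This combinatorial check is, in my view, the real content of the corollary; the Gr\"obner basis half is essentially formal.
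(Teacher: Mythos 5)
Your first two steps are correct, and they are exactly the content that the paper's one-line justification (``directly follows from Property~\ref{item:linear}'') is gesturing at: the substitution $\phi$ is an automorphism, the Gr\"obner basis hypothesis pushes the bound $|LE(C)|\geq 2$ from the generators to every nonzero element of the ideal, and Property~\ref{item:linear} then gives $LM(\phi(\langle\cP\rangle))=LM(\langle\cP\rangle)$, so $\cP'$ is a monic Gr\"obner basis of $\langle\cP'\rangle$ with $LM(P_i')=LM(P_i)$. This is also the only part of the corollary actually used in the proof of Theorem~\ref{prop:bound}, where what matters is that the leading exponents, hence the total degrees, of the (reduced) Gr\"obner basis are unchanged by the substitution.

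The gap is in the step you yourself flag as ``the real content'': the reducedness of $\cP'$. The term-by-term comparison you propose cannot be completed, because the unwanted divisibilities genuinely occur. Take $\cP=\{x^2,\,xy,\,y^2\}$, the reduced Gr\"obner basis of $\langle x,y\rangle^2$, with $|LE(P_i)|=2$ for all $i$, and any $a\neq 0$. Then $P_2'=x(y+ax+b)=xy+ax^2+bx$ contains the term $ax^2$, which is divisible by $LM(P_1')=x^2$, so $\cP'$ is a Gr\"obner basis of $\langle\cP'\rangle$ but not a reduced one (for $b=0$ the ideal is even unchanged, and its unique reduced Gr\"obner basis is still $\{x^2,xy,y^2\}\neq\cP'$). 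Your heuristic fails because a new monomial created by the substitution, although of strictly smaller $y$-exponent than its parent, can coincide with a leading monomial $LM(P_i')$ that is $<_{grlex}$-smaller than the parent monomial of $P_j$, and the reducedness of $\cP$ says nothing about such monomials. So the corollary is false as literally stated in its ``reduced'' part; what survives, and what your argument does establish rigorously, is that $\cP'$ is a monic Gr\"obner basis with the same leading monomials, equivalently that the reduced Gr\"obner basis of $\phi(\langle\cP\rangle)$ has the same leading exponents as that of $\langle\cP\rangle$. The paper's own proof, which consists only of the appeal to Property~\ref{item:linear}, does not address this point either.
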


\subsection{Main results on expansion complexity and Gr\"obner bases}

For a sequence $\cS=(s_n)_{n=0}^\infty$ and  $N\geq 1$, let $G_N(x)\in\F_q[x]$ be the \emph{generating polynomial} of the truncated sequence $(s_n)_{n=0}^{N-1}$, that is,
$$
G_N(x)=\sum_{n=0}^{N-1}s_nx^n.
$$
Clearly, $G(x)\equiv G_N(x)\mod x^N$.

The polynomials $h(x,y)$ satisfying~\eqref{eqcon} form an ideal $I$ generated by $I = \langle y-G_N(x), x^{N}\rangle$. We prove the following result which  makes a link between the expansion and i-expansion complexity and the Gr\"obner basis of $I$.

\begin{theorem}\label{prop:bound}
Given any sequence $\cS$ over $\F_q$ let $\mathcal{P}=\{P_1,\ldots, P_{\ell}\}$ be a reduced Gr\"obner basis for $\langle y-G_N(x),x^{N} \rangle$ with respect to $<_{grlex}$. Then
  \begin{equation*}
    E_N(\cS)=\min \{|LE(P_1)|,\ldots, |LE(P_\ell)|\},\quad
  \end{equation*}
  and
 \begin{equation*}
    E_N^{*}(\cS)\leq \min \{|LE(P_i)|: P_i\in\mathcal{P} \ \text{is irreducible}\}.
 \end{equation*}
  \end{theorem}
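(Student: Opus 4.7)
The plan is to identify the ideal $I = \langle y-G_N(x), x^N\rangle$ with the set of polynomials $h(x,y)\in\F_q[x,y]$ satisfying the congruence \eqref{eqcon}, and then read off both the equality and the inequality directly from the defining property of a Gr\"obner basis, keeping in mind that under $<_{grlex}$ the quantity $|LE(h)|$ coincides with the total degree $\deg h$.

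First I would verify the identification $I = \{h \in \F_q[x,y] : h(x,G(x)) \equiv 0 \bmod x^N\}$. The inclusion $\subseteq$ is immediate because $y - G_N(x)$ and $x^N$ both lie in the target set, and the latter is an ideal (since $G(x) \equiv G_N(x) \bmod x^N$). For the reverse inclusion, given $h(x,y)$ with $h(x,G_N(x)) \equiv 0 \bmod x^N$, I would perform the ``Taylor-type'' decomposition
\begin{equation*}
h(x,y) = \bigl(h(x,y) - h(x,G_N(x))\bigr) + h(x,G_N(x)),
\end{equation*}
observing that the bracketed piece is divisible in $\F_q[x,y]$ by $y - G_N(x)$ (because $h(x,y) - h(x,G_N(x))$ vanishes when $y = G_N(x)$, viewed as a polynomial in $y$ over $\F_q[x]$), while the last term is divisible by $x^N$ by hypothesis. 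Thus $h \in I$.

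Next, for the equality on $E_N(\cS)$, the inequality $E_N(\cS) \le \min_i |LE(P_i)|$ is immediate since each $P_i \in I$ is a valid candidate and $|LE(P_i)| = \deg P_i$ under $<_{grlex}$. For the reverse direction, let $h \in I$ be any nonzero polynomial realizing $E_N(\cS)$. By the Gr\"obner basis property, $LM(h) \in \langle LM(I)\rangle = \langle LM(P_1), \dots, LM(P_\ell)\rangle$, so $LM(P_i) \mid LM(h)$ for some $i$. Component-wise divisibility of monomials yields $LE(P_i) \le LE(h)$ coordinatewise, hence $|LE(P_i)| \le |LE(h)| = \deg h = E_N(\cS)$, finishing the equality (after handling the degenerate case $s_0 = \cdots = s_{N-1} = 0$, where $G_N = 0$ and both sides are $0$).

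Finally, the inequality for $E_N^*(\cS)$ is a one-line argument: every irreducible $P_i$ belonging to $\mathcal{P} \subset I$ provides a valid irreducible witness for the definition of $E_N^*(\cS)$, so $E_N^*(\cS) \le |LE(P_i)|$ for each such $i$, and taking the minimum yields the claim. The main (and only mildly technical) step is the ideal identification in the first paragraph; once that is in hand, the rest is essentially a restatement of the definition of a Gr\"obner basis with respect to a graded order.
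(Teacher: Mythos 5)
Your treatment of the first equality is correct and essentially identical to the paper's: the upper bound comes from using each $P_i$ as a witness for \eqref{eqcon}, and the lower bound from the fact that $LM(h)$ of any $h$ in the ideal is divisible by some $LM(P_i)$, so $\deg P_i\le \deg h$ under the graded order. The explicit verification that $\langle y-G_N(x), x^N\rangle$ coincides with the set of polynomials satisfying \eqref{eqcon} is a detail the paper takes for granted, and your argument for it is fine. (One small slip: in the degenerate case $s_0=\dots=s_{N-1}=0$ the right-hand side is not $0$ --- the reduced basis of $\langle y, x^N\rangle$ is $\{y,x^N\}$, whose minimal $|LE|$ is $1$, while $E_N=0$ by definition; the paper silently excludes this case as well.)

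The gap is in the second part. Your one-line witness argument does prove the inequality as literally written, but only under the reading in which the minimum may be taken over an empty set; you never show that $\mathcal{P}$ actually contains an irreducible polynomial. That existence claim is the entire content of the paper's proof of the second assertion, and it is exactly what the displayed consequence immediately after the theorem, $E_N^{*}(\cS)\le \max_i |LE(P_i)|$, relies on. The paper establishes it by first reducing to the case $s_0=s_1=0$ via the substitution $y\mapsto y+s_0+s_1x$ (Corollary~\ref{cor:linear} guarantees this preserves the reduced Gr\"obner basis and the leading data, since all $|LE(P_i)|\ge 2$ by the first part), and then arguing by contradiction: if every $P_i=R_iT_i$ were reducible, reducedness of the basis forces $T_i(x,G_N(x))\not\equiv 0 \bmod x^N$, hence $R_i(x,G_N(x))\equiv 0\bmod x$, and likewise for $T_i$; since $G_N$ then has no constant or linear term, both factors lie in $\langle x,y\rangle$, so $I\subseteq \langle x^2,xy,y^2\rangle$, contradicting $y-G_N(x)\in I$. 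Without an argument of this kind your proof of the second bound establishes strictly less than the paper's.
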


As a consequence, we have the following bounds on the i-expansion complexity:
$$\min \{|LE(P_1)|,\ldots, |LE(P_\ell)|\}\leq E_N^{*}(\cS)\le \max \{|LE(P_1)|,\ldots, |LE(P_\ell)|\}.$$
\begin{remark}
    From a Gr\"obner basis with respect to a lexicographic order
    one can compute the
    Gr\"obner basis of the same ideal  with respect to the graded
    lexicographical using 
    the FGLM algorithm~\cite{faugere1993efficient}.
    The computational complexity of the algorithm, from an ideal generated by
    $I = \langle y-G_N(x), x^{N}\rangle$
    is $O\left(N^3 \right)$ field
    operations~\cite[Proposition 4.1]{faugere1993efficient}.
    Thus one can find the polynomials $P_1,\dots,
P_\ell$ in Theorem~\ref{prop:bound}, and compute the  expansion and
i-expansion complexity  in at most $ N^3(\log q)^{O(1)}$ binary
operations.
\end{remark}

\begin{proof}
In order to prove the first part, observe that for any polynomial $h(x,y)$ satisfying~\eqref{eqcon} we have $LM(P_i)\leq_{grlex} LM(h)$ for some $i$, so $\deg P_i\leq \deg h(x,y)$.

For the second part, if $s_n=0$ for $2\leq n\leq N-1$, then the result is immediate. Otherwise, we can reduce it to the case when $s_0 = s_1 = 0$. If the non-zero polynomial $h(x,y)$ satisfies~\eqref{eqcon}, then $h_1(x,y)= h(x,y+s_0+s_1x)$ is a polynomial with $\deg h = \deg h_1$ and 
$$
 h_1\left(x,\sum_{n=2}^{N-1}s_nx^n\right) = h\left(x,G_N(x)\right) \equiv 0 \mod x^N.
$$

As $E_N(\S)\geq 2$, we have $|LE(P_1)|,\dots,|LE(P_\ell)|\geq 2 $ by
the first part of the theorem. Then by Corollary~\ref{cor:linear}
  the reduced Gr\"obner basis changes according to the linear
  transform of the variables $y \rightarrow y+s_0+s_1x$. Moreover, the
  irreducibly of polynomials $h(x,y)$ and $P_1,\dots, P_\ell$ does not
  changes under this transformation. Evenmore, because the definition
  of $<_{grlex}$, applying that linear transformation to $P_1,\ldots,
  P_{\ell}$ results in a Gr\"obner basis with respect to $<_{grlex}$.

Now, we are going to show that one of the polynomials $P_1,\ldots, P_{\ell}$ must
be irreducible. Suppose contrary, that all the polynomials $P_1,\ldots, P_{\ell}$ are reducible, so for all $i=1,\ldots, \ell,$
$$
P_i(x,y) = R_i(x,y)T_i(x,y), \quad |LE(R_i)|,|LE(T_i)|\geq 1 \quad \text{for} \quad i=1,\ldots, \ell.
$$
As $P_i$ belongs to the reduced Gr\"obner basis of $\langle y-G_N(x), x^N\rangle$, we have  $T_i(x,G_N(x))\not\equiv 0\mod x^N$ and so
\begin{equation*}
 R_i(x,G_N(x))\equiv 0 \mod x.
\end{equation*}
Since $s_0=s_1=0$, the smallest degree term of $G_N(x)$ has degree at least two, so we must have
$ R_i(x,y)\in \langle x, y\rangle $. Similarly, we also get $ T_i(x,y)\in \langle x, y\rangle $.
Write
\begin{equation*}
    R_i(x,y) = y q_1(x,y) + x r_1(x),\quad
    T_i(x,y) = y q_2(x,y) + x r_2(x).
\end{equation*}
Then $R_i(x,y)T_i(x,y)\in \langle y^2, yx, x^2\rangle$, so $I =
\langle y-G_N(x),x^N\rangle = \langle R_1T_1,\ldots,
R_{\ell}T_\ell\rangle\subset \langle y^2, yx, x^2\rangle$. However,
$y-G_N(x)\not\in \langle y^2, yx, x^2\rangle $, a contradiction.  
\end{proof}

\section{A probabilistic result}\label{sec:prob}
In this section we study the $N$th expansion complexity for random sequences. We prove, that for such sequences the $N$th expansion complexity is large.

Let $\mu_q$ be the uniform probability measure on $\F_q$ which assigns the measure $1/q$ to each element of $\F_q$. Let $\F_q^{\infty}$ be the sequence space over $\F_q$
and let $\mu_q^{\infty}$ be the complete product probability measure
on $\F_q^{\infty}$ induced by $\mu_q$. We say that a property of
sequences $\mathcal{S} \in \F_q^{\infty}$
holds $\mu_q^{\infty}$-\emph{almost everywhere} if it holds for a set of sequences $\mathcal{S}$ of $\mu_q^{\infty}$-measure $1$. We may view such a property as a typical
property of a random sequence over $\F_q$.

\begin{theorem}\label{thm:probabilistic}
 We have
 \[
  \liminf_{N \to \infty} \, \frac{E_N({\cS})}{N^{1/2}} \geq \frac{\sqrt{2}}{2}
\qquad \mu_q^{\infty}\mbox{-almost everywhere}.
 \]
\end{theorem}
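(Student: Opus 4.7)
The plan is to apply the first Borel--Cantelli lemma. Fixing $\varepsilon > 0$, it suffices to show that
\[
  \sum_{N=1}^\infty \mu_q^\infty\left[E_N(\S) < (1-\varepsilon)\sqrt{N/2}\right] < \infty,
\]
and then let $\varepsilon$ run through a countable sequence tending to $0$. Set $D = D_N = \lceil(1-\varepsilon)\sqrt{N/2}\rceil$. By definition, $E_N(\S) < D$ is the event that there exists a nonzero $h \in \F_q[x,y]$ with $\deg h \leq D-1$ and $h(x,G(x)) \equiv 0 \mod x^N$, so $\Pr[E_N < D] = A_{N,D}/q^N$, where $A_{N,D}$ is the number of initial segments $(s_0, \ldots, s_{N-1}) \in \F_q^N$ admitting such an $h$.

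For each fixed nonzero $h$, I would estimate the number of sequences satisfying $h(x,G(x)) \equiv 0 \mod x^N$ by analysing the coefficients $c_k = [x^k]\,h(x, G(x))$. A direct calculation (expanding $G(x)^j = (\sum_n s_n x^n)^j$) gives
\[
  c_k = h_y(0, s_0) \cdot s_k + R_k(s_0, \ldots, s_{k-1})
\]
for $k \geq 1$, where $h_y = \partial h/\partial y$ and $R_k$ depends only on $s_0, \ldots, s_{k-1}$. In the smooth case $h_y(0, s_0) \neq 0$, each $s_k$ is uniquely determined by $s_0, \ldots, s_{k-1}$, so $\S$ is fully determined by $s_0$; as $s_0$ must be one of the at most $\deg_y h \leq D - 1$ roots of $h(0, \cdot)$, this contributes at most $D - 1$ sequences per $h$. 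In the singular case $h_y(0, s_0) = 0$, the coefficient $c_1$ evaluates to $h_x(0, s_0)$, which must vanish for any solution to exist; but then $(0, s_0)$ is a singular point of the curve $h(x, y) = 0$, whose number on the $y$-axis is bounded by a Bezout-type estimate, and an iterative blow-up $y \mapsto s_0 + xz$ reduces the count to an analogous problem for the transformed polynomial $\widetilde h(x, z) = h(x, s_0 + xz)$.

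A union bound over $h$ (up to scalar), of which there are at most $q^{\binom{D+1}{2}}$, combined with the per-$h$ estimate above, yields $A_{N,D} \leq q^{\alpha D^2 + O(D)}$ for some constant $\alpha < 2$. Therefore
\[
  \Pr[E_N < D] \leq q^{\alpha D^2 - N + O(D)} \leq q^{-c(\varepsilon) N}
\]
with $c(\varepsilon) > 0$ whenever $D \leq (1-\varepsilon)\sqrt{N/2}$, giving summability and hence the theorem via Borel--Cantelli. The main technical obstacle I anticipate is the singular-case analysis: degenerate polynomials such as $h(x, y) = y^{D-1}$ admit the condition for a family of sequences of size up to $q^{N - \lceil N/(D-1) \rceil}$, so a careful stratification of polynomials by their order of vanishing at the origin is needed to guarantee that the contributions of these degenerate polynomials do not dominate in the bound on $A_{N,D}$.
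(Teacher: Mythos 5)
Your overall strategy (Borel--Cantelli plus a union bound over low-degree polynomials) is the same as the paper's, but the core counting estimate is not established, and in the form you state it, it is false. You claim $A_{N,D}\le q^{\alpha D^2+O(D)}$ with $\alpha<2$ and hence $\Pr[E_N<D]\le q^{-c(\varepsilon)N}$. This cannot hold: as your own closing remark observes, the event that $s_0=\dots=s_{m-1}=0$ with $m=\lceil N/(D-2)\rceil=O(\sqrt{N})$ already forces $E_N(\cS)<D$ via $h(x,y)=y^{D-2}$ (or $E_N=0$), and this event has probability $q^{-O(\sqrt{N})}$, not $q^{-cN}$. So the best achievable bound is of the shape $q^{-\delta\sqrt{N}}$ (which is still summable and suffices for Borel--Cantelli), and your ``smooth case'' Hensel-type recursion, which handles only polynomials with $h_y(0,s_0)\ne 0$, does not touch the polynomials that actually dominate the count. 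The proposed iterative blow-up $y\mapsto s_0+xz$ is a plausible direction (it is essentially Newton--Puiseux), but it is exactly the part that is not carried out, and a naive union bound over all $h$ cannot work once a single $h$ can account for $q^{N-O(\sqrt{N})}$ sequences.

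The paper closes this gap differently. Given $h$ of degree at most $b_N$ with $h(x,G(x))\equiv 0 \bmod x^N$, it factors $h=h_1\cdots h_k$ into irreducibles, so that $h_i(x,G(x))\equiv 0\bmod x^{N_i}$ with $N_1+\dots+N_k= N$. Since $\sum_j\binom{\deg h_j+2}{2}\le\binom{\deg h+2}{2}\le(1-\varepsilon_0)N$ and $k\le b_N\le\sqrt{N}$, an averaging argument produces one irreducible factor $h_j$ with $N_j-\binom{\deg h_j+2}{2}\ge\varepsilon_0\sqrt{N}$. One then union-bounds only over that single irreducible factor: by a result of Diem, an irreducible polynomial of degree $d$ admits at most $d$ truncated power-series roots modulo $x^{N_j}$ (this is the correct replacement for your smooth/singular dichotomy, and it is where irreducibility is indispensable --- it rules out examples like $y^{D-1}$), while by Carlitz there are about $q^{\binom{d+2}{2}}/(q-1)$ normalized irreducible polynomials of degree $d$. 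The union-bound cost $q^{\binom{d+2}{2}}$ is then beaten by the constraint cost $q^{-N_j}$ by a factor $q^{-\varepsilon_0\sqrt{N}}$, giving $\mu_q^{\infty}(A_N)\ll q^{-\delta\sqrt{N}}$. Without some version of this factor-and-pigeonhole step (or an equivalently sharp stratification of the degenerate polynomials), your argument does not go through.
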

We remark, that Theorem~\ref{thm:probabilistic} is the corrected form of \cite[Theorem~4]{merai2017expansion}. 
In \cite{merai2017expansion}, the authors used \cite[Proposition~7]{di12},  which requires the irreducibly property, and consequently, it holds for the i-expansion complexity instead for the expansion complexity, see \cite[Theorem~2]{merai2018expansion}. Theorem~\ref{thm:probabilistic} gives now a lower bound on the expansion complexity of typical sequences.

\begin{proof}
First we fix an $\varepsilon$ with $0 < \varepsilon < 1$ and we put
\begin{equation*}
b_N= \lfloor (1-\varepsilon)(N/2)^{1/2} \rfloor \qquad \mbox{for } N=1,2,\ldots .
\end{equation*}
Then
\begin{equation}\label{eq:b_N}
b_N\geq 1 \quad \text{and} \quad \binom{b_N +2}{2} \le  (1-\varepsilon_0)N
\end{equation}
for some positive $\varepsilon_0$ if $N$ is large enough.
For such $N$ put
\[
A_N=\{{\cS} \in \F_q^{\infty}: E_N({\cS}) \le b_N\}.
\]
Since $E_N({\cS})$ depends only on the first $N$ terms of ${\cS}$, the measure $\mu_q^{\infty}(A_N)$ is given by
\begin{equation} \label{equq}
\mu_q^{\infty}(A_N) =q^{-N} \cdot \# \{{\cS} \in \F_q^N: E_N({\cS}) \le b_N\}.
\end{equation}

If $\S\in\F_q^N$ is a sequence with $E_N(\S)\leq b_N$, there is a
polynomial $h(x,y)$ with degree at most $b_N$ with \eqref{eqcon}. Write
$h(x,y)=h_1(x,y)\cdots h_k(x,y)$ with $h_i(x,y)$ irreducible factor, then 
\begin{equation}\label{eq:h_i}
 h_i(x,G(x))\equiv 0 \mod x^{N_i}, \ (1\leq i\leq k)\quad \text{with} \quad  N_1+\dots+N_k=N.
\end{equation}
Now
\begin{align*}
  \frac{1}{k}\sum_{j=1}^{k}\left (N_j - \binom{\deg h_j+2}{2}\right ) &\ge \frac{N -
    (\sum_{j=1}^k  \binom{\deg h_j+2}{2})}{k} \ge \frac{N-\binom{b_N+2}{2}}{k}\\
  &\geq \frac{\varepsilon_0 N}{b_N}\ge \varepsilon_0\sqrt{N}
\end{align*}
by the choice of $b_N$. So $N_j - \binom{\deg h_j+2}{2} \ge \varepsilon_0\sqrt{N}$ for some $1\leq j\leq k$.
Without loss of generality, we can suppose that $j=1$.

We estimate the cardinality of $A_N$ by the number of such sequences
that
$$
h_1(x,G(x))\equiv 0 \mod x^{N_1}.
$$
Write $\S=(\S_1,\S_2)\in\F_q^N$ with $\S_1\in\F_q^{N_1}$ and
$\S_2\in\F_q^{N- N_1}$. For a fixed irreducible polynomial of degree
$d$ there are at most $d$ choices for $\S_1$ (see \cite[p. 332]{di12})
and $q^{N-N_1}$ choices for $\S_2$.
If two irreducible polynomials are
constant multiples of each other, they define the same sequences $\S_1$.

Let a polynomial $f(x,y)$ of degree $d$ be called \emph{normalized}
if in the  coefficient vector $(a_0,a_1,\dots, a_d)$ of the
homogeneous part with degree $d$ of $f$, i.e.,
\[
 a_0x^d+a_1x^{d-1}y+\dots+a_d y^d,
\]
the first nonzero element is 1.

Let $I_2(d)$ be the number of  \emph{normalized} irreducible
polynomials (with two variables) in $\F_q[x,y]$ of total degree $d$. Then by
\cite{carlitz63} we have
\[
 I_2(d)=\frac{1}{q-1}q^{\binom{d+2}{2}}+O\left(q^{\binom{d+1}{2}}\right).
\]

Thus
 \begin{align*}
   \# \{{\cS} \in \F_q^N: E_N({\cS}) \le b_N\}
   & \leq \sum_{d_1\leq b_N} \sum_{\varepsilon_0\sqrt{N} + \binom{d_1 + 2}{2} \le
     N_1\le N}d_1 I(d_1)q^{N-N_1}\\
   &\ll \sum_{d_1\leq b_N} \sum_{\varepsilon_0\sqrt{N} + \binom{d_1 + 2}{2} \le
     N_1\le N}b_N q^{\binom{d_1+2}{2}-1+ N-N_1} \\
   &\ll \sum_{d_1\leq b_N}b_N N q^{N - \varepsilon_0\sqrt{N}}\ll b_N^2 N q^{N - \varepsilon_0\sqrt{N}}.
\end{align*}
By the choice of $b_N$, we have that $\mu_q^\infty(A_N)$ is at most
$q^{-\delta  N^{1/2}}$ for some positive~$\delta$. If $N$ is large enough, then $q^{-\delta  N^{1/2}}<N^{-2}$ so
$$
\sum_{N} \mu_q^{\infty}(A_N)\leq\sum_{N}q^{-\delta N^{1/2}}\ll\sum_{N}N^{-2} < \infty.
$$
Then the Borel-Cantelli lemma  shows that the set of all ${\cS} \in
\F_q^{\infty}$ for which ${\cS} \in A_N$ for infinitely many $N$
has $\mu_q^{\infty}$-measure $0$. In other words,
$\mu_q^{\infty}$-almost everywhere we have
${\cS} \in A_N$ for at most finitely many $N$. It follows then from the definition of $A_N$ that $\mu_q^{\infty}$-almost everywhere we have
$$
E_N({\cS}) > b_N > (1-\varepsilon) (N/2)^{1/2}
$$
for all sufficiently large $N$. Therefore $\mu_q^{\infty}$-almost everywhere,
$$
\liminf_{N \to \infty} \, \frac{E_N({\cS})}{(N/2)^{1/2}} \geq (1-\varepsilon).
$$
By applying this for $\varepsilon =1/r$ with $r=1,2,\ldots$ and noting that the intersection of countably many sets of $\mu_q^{\infty}$-measure $1$ has again $\mu_q^{\infty}$-measure $1$,
we obtain the result of the theorem.
\end{proof}

\section{Sequences defined by differential equations}\label{sec:seq}

In this section we study the expansion complexity of sequences characterized by the property that their generating function satisfies certain differential equations. For $r\geq 0$ let $D^{(r)}$ denote the $r$-th Hasse derivative defined by
$$
D^{(r)}x^n=\binom{n}{r}x^{n-r}.
$$
The first Hasse derivative $D^{(1)}$ is identical to the standard derivative. Moreover, it satisfies the chain rule
\begin{equation}\label{eq:hasse}
D^{(r)}(fg)=\sum _{i=0}^{r}D^{(i)}(f)D^{(r-i)}(g)
\end{equation}
for all $f,g\in \F_q[x]$. For more details see \cite{Hasse}.

In this section we consider sequences $\cS=(s_n)$ whose generating function $G(x)$ satisfies
\begin{equation}\label{eq:diff_equation_gen}
 f_{k+1}(x) D^{(k)}\left(G(x)\right)+\dots+f_2(x)D^{(1)}\left(G(x)\right)+f_1(x)G(x)+f_0(x)=0
\end{equation}
with polynomials $f_{k+1}(x),\dots, f_{0}(x)\in\F_q[x]$.

In Theorem~\ref{thm:seq} below, we give bounds on the $N$th expansion complexity of sequences over prime fields whose generating function satisfies a first order differential equation \eqref{eq:diff_equation_gen} with small degree coefficient polynomials.

One of the most important examples for such  sequence is the \emph{explicit inversive generator} over a prime field $\F_p$, with some prime $p\geq 3$, defined by
\begin{equation}\label{eq:explicit_inverse}
s_n =
\left\{ 
\begin{array}{cl}
 (an-b)^{-1} & \text{if } an-b\not \equiv 0 \mod p\\
 0 & \text{otherwise,}
\end{array}
\right.
\end{equation}
with some $a,b\in \F_p$, $a\neq 0$. Its generating function $G_{a,b}(x)$ satisfies
$$
ax(1-x)^pG'_{a,b}(x)-b(1-x)^pG_{a,b}(x)-(1-x)^{p-1}+x^{b/a \bmod p}=0,
$$
see Corollary~\ref{cor:inversive} below.

\begin{theorem}\label{thm:seq}
Let $\cS=(s_n)$ be a sequence over $\F_p$. Assume, that its  generating function $G(x)$ satisfies
\begin{equation}\label{eq:diff_equation}
f_2(x)G'(x)+f_1(x)G(x)+f_0(x)\equiv 0 \mod x^M
\end{equation}
with $M\geq 1$ for some polynomials $f_0(x),f_1(x),f_2(x)\in\F_p[x]$ 
such that there is an $\alpha\in\overline{\F}_q$ with $f_2(\alpha)=0$, $f_1(\alpha)=0$ and $f_2'(\alpha) f_0(\alpha)\neq 0$.

Let $F=\max\{\deg f_2-1,\deg f_1, \deg f_0-1\}$.  Then
$$
E_N(\cS)(E_N(\cS)+F) \geq N \quad \text{or} \quad
E_N(\cS)\geq p \quad \text{for } \deg f_0 +1< N\leq M.
$$
\end{theorem}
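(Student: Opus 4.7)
The plan is to produce, from a minimum witness $h(x,y)\in\F_p[x,y]$ of total degree $E:=E_N(\cS)$ realizing $h(x,G(x))\equiv 0\bmod x^N$, a second polynomial relation of controlled total degree by differentiating the given ODE, and then to eliminate $y$ via a resultant. First I reduce to the case $E<p$; otherwise the second disjunct of the theorem holds.

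Applying $d/dx$ gives $\partial_x h(x,G(x))+G'(x)\partial_y h(x,G(x))\equiv 0\bmod x^{N-1}$; multiplying by $f_2(x)$ and substituting \eqref{eq:diff_equation} (valid mod $x^M$, hence mod $x^{N-1}$ since $N\le M$) yields $H(x,G(x))\equiv 0\bmod x^{N-1}$ where
\[
H(x,y):=f_2(x)\partial_xh(x,y)-\bigl(f_1(x)y+f_0(x)\bigr)\partial_yh(x,y)\in\F_p[x,y],
\]
with $\deg H\le E+F$ by a direct total-degree count. I then form the resultant $R(x):=\mathrm{Res}_y(h(x,y),H(x,y))\in\F_p[x]$, which by the Bezout bound has $\deg R\le E(E+F)$. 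Writing $R=a(x,y)h(x,y)+b(x,y)H(x,y)$ via the Sylvester identity and substituting $y=G(x)$ shows $R(x)\equiv 0\bmod x^{N-1}$; a sharpening to $R\equiv 0\bmod x^N$, which is where the hypothesis $N>\deg f_0+1$ enters (to extract an extra factor of $x$ from the low-order coefficients of the Sylvester cofactor $b(x,G(x))$), then yields $E(E+F)\ge N$ provided $R\ne 0$.

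The main obstacle will be showing $R\ne 0$, equivalently $\gcd(h,H)=1$ in $\F_p[x,y]$, which I plan to prove via $\alpha$-specialization. Suppose $g\in\F_p[x,y]$ is an irreducible common factor of $h$ and $H$. Writing $h=g\tilde h$ and reducing $g\mid H$ modulo $g$ (after ruling out $g\mid\tilde h$ by iteration) gives $g\mid f_2\partial_xg-(f_1y+f_0)\partial_yg$. Specializing at $x=\alpha$ and using $f_2(\alpha)=f_1(\alpha)=0$, $f_0(\alpha)\ne 0$ yields $f_0(\alpha)\partial_yg(\alpha,y)=q(\alpha,y)g(\alpha,y)$ for some $q$ of total degree $\le F$. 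Because $\deg_yg\le E<p$, a $y$-degree count then forces either $g(\alpha,y)\equiv 0$ — in which case the minimal polynomial $m_\alpha\in\F_p[x]$ of $\alpha$ divides $g$ and, since $m_\alpha$ is a unit in $\F_p[[x]]$ when $\alpha\ne 0$, the cofactor $h/g$ is a smaller-degree witness contradicting minimality of $h$ — or $g(\alpha,y)$ equals a nonzero constant in $y$, which the hypothesis $N>\deg f_0+1$ rules out by a direct comparison of vanishing orders at $x=0$. The degenerate subcase $\alpha=0$ requires a separate but analogous iteration on the power of $x$ dividing $g$.
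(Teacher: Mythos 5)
Your overall route is genuinely different from the paper's: you try to eliminate $y$ by a resultant $R=\mathrm{Res}_y(h,H)$ and bound $\deg R$ by Bezout, whereas the paper runs a minimal-counterexample argument in $N$ to force the minimal witness $h$ to be \emph{irreducible}, then applies Diem's divisibility lemma (Lemma~\ref{lemma:Diem}) to conclude $H=c(x)h$ outright, and derives the contradiction from that identity at $x=\alpha$. Your elimination idea is legitimate in principle, but as written it has three genuine gaps. First and most seriously, your dichotomy at $x=\alpha$ does not close: from $-f_0(\alpha)\partial_y g(\alpha,y)=q(\alpha)g(\alpha,y)$ the case ``$g(\alpha,y)$ is a nonzero constant'' is perfectly consistent (both sides are then $0$), and a ``comparison of vanishing orders at $x=0$'' cannot rule it out. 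What actually kills this case is a multiplicity count at $\alpha$ on the leading $y$-coefficient: writing $g=\sum_i\rho_i(x)y^i$ with $k=\deg_y g$, the coefficient of $y^k$ gives $f_2\rho_k'-kf_1\rho_k=q\rho_k$, and since $\alpha$ is a \emph{simple} zero of $f_2$ the two sides have different vanishing orders at $\alpha$ unless $q(\alpha)\neq 0$. Note that your sketch never uses the hypothesis $f_2'(\alpha)\neq 0$ — a reliable sign that this step is missing.

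Second, your derivation of $g\mid f_2\partial_xg-(f_1y+f_0)\partial_yg$ from $g\mid\gcd(h,H)$ breaks down when $g^2\mid h$: then $g$ divides both $\partial_xh$ and $\partial_yh$, so $g\mid H$ holds automatically and yields no information, and no ``iteration'' recovers the needed congruence. Minimality of the total degree of $h$ does not make $h$ squarefree (a repeated factor can realize the order-$N$ vanishing without any proper factor being a witness for the same $N$), which is precisely why the paper inducts on $N$ to get irreducibility of $h$ before invoking Lemma~\ref{lemma:Diem}. Third, the claimed sharpening from $R\equiv 0\bmod x^{N-1}$ to $R\equiv 0\bmod x^{N}$ is unjustified: if $h(x,G(x))=x^Nu(x)$ then $H(x,G(x))\equiv Nf_2(0)u(0)x^{N-1}\bmod x^{N}$, which is generically nonzero, and there is no reason for $b(x,G(x))$ to vanish at $x=0$; so your argument only yields $E_N(\cS)(E_N(\cS)+F)\geq N-1$. (The hypothesis $N>\deg f_0+1$ is used in the paper for something else entirely, namely to guarantee the truncated sequence is not identically zero and to normalize $s_0\neq 0$.)
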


Previously, only a few examples for sequences were known with large expansion complexity, all of them share the property \eqref{eq:diff_equation_gen}. Namely, the sequences of binomial coefficients $\mathcal{A}=(a_n)_{n=0}^{\infty}$, defined by
$$
a_n=\binom{n+k}{k} \mod p, \quad n=0,1,\dots
$$
for some $k\geq 0$,  whose generating function is $G_k(x)=(1-x)^{-1-k}$ by \cite[Lemma~2]{merai2017expansion}, which satisfies
$$
(x-1)G'_k(x) -(k+1)G_k(x) =0,
$$
and the explicit inversive generator defined by \eqref{eq:explicit_inverse} with $b=0$, see \cite{merai2018expansion}.

We also remark, that \eqref{eq:diff_equation} defines a linear
recurrence relation to the counter-dependent sequence  $(n\, s_n)$ in
terms of $(s_n)$ and $(n\, s_n)$.
This type of relations appears in the so called
\emph{counter-dependent nonlinear recursive pseudorandom number
  generators}.  
A counter-dependent nonlinear recursive pseudorandom number generator
is of the form: 
$$
s_n = f(s_{n-1},..,s_{n-m}, n).
$$
This class of generators was introduced by Shamir and Tsaban in order
to avoid unexpected short cycles (see Definition 2.4 of
\cite{ShamirTsaban}) for $m=1$.
Special cases of this type of generators have been studied in relation
with exponential sums and multiplicative character sums
\cite{winterhof2009,EW2006,gomez,SW2006}.
For example, sequences whose 
generating function $G(x)$ satisfies
$$
x^2(1-x) G'(x)- (1-x)^2 G(x) -(s_0-1)x+s_0 = 0
$$
coincides with the special class of sequences
proposed by Shparlinski and Winterhof~\cite{SW2006}, 
defined as $s_n = n s_{n-1} +1$.

\smallskip

In order to prove Theorem~\ref{thm:seq}, we need the following result, see \cite[Lemma~6]{di12}.
\begin{lemma}\label{lemma:Diem}
 Let $h(x,y)\in\F_q[x,y]$ be an irreducible polynomial of degree $d$ and let $\cS$ be an expansion sequence defined by $h(x,y)$. Let $f(x,y)\in\F_q[x,y]$ be a nonzero polynomial with 
 $$
 f(x,G(x))\equiv 0 \mod x^{d\cdot \deg f}.
 $$
 Then $f(x,y)$ is a multiple of $h(x,y)$.
\end{lemma}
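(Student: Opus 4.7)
The natural strategy is a classical resultant argument. I proceed by contradiction: assume $h\nmid f$. Since $h$ is irreducible, this forces $\gcd(h,f)=1$ in $\F_q(x)[y]$, so the resultant $R(x) := \mathrm{Res}_y(h(x,y),f(x,y))\in\F_q[x]$ is a nonzero polynomial. The plan is to combine a lower bound on its $x$-adic order with an upper bound on its $x$-degree and show the two bounds are incompatible.

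For the lower bound I use the Bezout identity for resultants: there exist $A,B\in\F_q[x,y]$ with $R(x)=A(x,y)h(x,y)+B(x,y)f(x,y)$. Substituting $y=G(x)$, which is legitimate as a formal power-series identity since $h(x,G(x))=0$, gives $R(x)=B(x,G(x))\cdot f(x,G(x))$. Since $B(x,G(x))\in\F_q[[x]]$ has nonnegative $x$-adic order and the hypothesis forces $v_x(f(x,G(x)))\geq d\cdot\deg f$, this yields $R(x)\equiv 0\pmod{x^{d\cdot\deg f}}$.

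For the upper bound I read $R(x)$ as the Sylvester determinant of $h$ and $f$ viewed as polynomials in $y$ over $\F_q[x]$. Row-by-row degree accounting, using $\deg_x h_k\leq d-k$ for the coefficient of $y^k$ in $h$ and analogously for $f$, yields $\deg R(x)\leq d\cdot\deg f$; equivalently, this is projective Bezout applied to the affine intersection of $\{h=0\}$ and $\{f=0\}$ projected to the $x$-line.

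Combining the two bounds forces $R(x)$ to be a scalar multiple of $x^{d\cdot\deg f}$. The remaining step is to show that this scalar must vanish, yielding $R(x)=0$ and contradicting $\gcd(h,f)=1$; this I expect to be the main obstacle, since the generic resultant bound is saturated by examples and one has to use the specific fact that $G\in\F_q[[x]]$ (rather than a general Puiseux series) to squeeze out one more order of vanishing. The cleanest route I can see is the Poisson product formula $R(x)=\mathrm{lc}_y(h)(x)^{\deg_y f}\prod_i f(x,\alpha_i(x))$, where $\alpha_1(x)=G(x)$ and the remaining $\alpha_i(x)$ are the other $y$-roots of $h(x,y)$ in the algebraic closure of $\F_q((x))$. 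The single factor indexed by $G$ already contributes at least $d\cdot\deg f$ to the $x$-adic valuation of $R$; tracking the contributions at the conjugate roots and at infinity via $\mathrm{lc}_y(h)$ is intended to push the total valuation strictly above $d\cdot\deg f$, which combined with the degree bound forces $R(x)=0$ and closes the argument.
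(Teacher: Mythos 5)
The paper itself contains no proof of this lemma: it is quoted from Diem \cite[Lemma~6]{di12}, so there is no internal argument to measure yours against. Your resultant strategy is the natural elementary substitute for Diem's function-field argument (bounding the degree of the divisor of $f$ restricted to the curve $h=0$), and your first three steps are sound: with $h\nmid f$ and $h$ irreducible of positive $y$-degree, $R(x)=\mathrm{Res}_y(h,f)$ is a nonzero polynomial; the identity $R=Ah+Bf$ evaluated at $y=G(x)$ gives $v_x(R)\geq d\cdot\deg f$; and the Sylvester-determinant count gives $\deg_x R\leq d\cdot\deg f$ (indeed $\leq d\,\deg f-(d-\deg_y h)(\deg f-\deg_y f)$).

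However, the step you defer --- forcing the scalar in $R(x)=c\,x^{d\cdot\deg f}$ to vanish --- is not a refinement you can expect to extract from the Poisson formula: it is impossible, because the statement as printed is false in the boundary case. Take $h(x,y)=y-x$, so $d=1$ and $G(x)=x$, and $f(x,y)=y$: then $f(x,G(x))=x\equiv 0\bmod x^{d\cdot\deg f}=x^{1}$, yet $h\nmid f$. (Likewise $h=y-x^{2}$, $f=y$ with $d=2$; in general Bezout's bound $\sum_P I_P(h,f)=d\cdot\deg f$ is attained whenever the whole intersection of the two curves concentrates on the single branch $y=G(x)$ over $x=0$, and then every inequality in your chain is an equality with $c\neq 0$.) The usable form of the lemma requires vanishing to order \emph{strictly greater} than $d\cdot\deg f$, i.e.\ the modulus $x^{d\cdot\deg f+1}$; the transcription in this paper has evidently dropped a ``$+1$'' (which also propagates into the application in Theorem~\ref{thm:seq}, where only $N-1\geq d\cdot\deg g$ is available). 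With the corrected hypothesis your own computation closes immediately --- $v_x(R)\geq d\cdot\deg f+1>\deg_x R$ forces $R=0$, contradicting $\gcd(h,f)=1$ --- and no Poisson product formula or bookkeeping at the conjugate roots is needed. So: right method, correctly executed up to the point you flagged, but the missing step is a defect of the statement, not of your argument, and you should have tested the boundary case before investing in the Puiseux-series machinery.
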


\begin{proof}[Proof of Theorem~\ref{thm:seq}]
Put $K=\deg f_0(x)$. There is a nonzero element among $s_0,\dots, \allowbreak s_{K+1}$ and thus $E_{K+1}(\cS)\geq 1$. Indeed, if $G(x)\equiv 0 \mod x^{K+2}$, then $f_0(x)=0$ by \eqref{eq:diff_equation}, a contradiction. 

If $s_0=0$, consider the sequence $\bar{\cS}=(\bar{s}_n)$ with $\bar{s}_0=1$ and $\bar{s}_n=s_n$ for $n\geq 1$. Let $\bar{G}(x)=G(x)+1$ be the generating function of $\bar{\cS}$. Then $h(x,\bar{G}(x))\equiv 0 \mod x^N$ if and only if $h(x,G(x)+1)\equiv 0 \mod x^N$. Thus $E_N(\cS)=E_N(\bar{\cS})$ whenever $E_N(\cS)>0$. As it holds for $N\geq K+1$, we can assume that $s_0\neq 0$ and $E_1(\cS)=1$.

Now suppose that the result does not hold for some $N\geq K+2$, and fix $N$ as a minimal value such
\begin{equation}\label{eq:contradiction}
d(d+F)<N.
\end{equation}
where $d=E_N(\cS)$. We can assume, that $d<p$.
Let $h(x,y)\in\F_q[x,y]$ such that $\deg h(x,y)=d$ and $h(x,G(x))\equiv 0 \mod x^N$. First we prove, that $h(x,y)$ is irreducible. Suppose, that $h(x,y)=h_1(x,y)h_2(x,y)$ and
$$
h_1\left(x,G(x)\right)\equiv 0 \mod x^N_1, \quad h_2\left(x,G(x)\right)\equiv 0 \mod x^N_2, \quad N_1+N_2\geq N.
$$
Then by the minimality of $N$ we have
$$
  \deg h_1(\deg h_1+F)\geq N_1 \text{ and } \deg h_2(\deg h_2+F)\geq N_2.
$$
Thus 
\begin{equation}
 N_1+N_2\leq   \deg h_1(\deg h_1+F)+\deg h_2(\deg h_2+F)
\leq  d(d+F)<N,
\end{equation}
a contradiction.

Taking the derivative of the equation $h(x,G(x))\equiv 0 \mod x^N$ we get
$$
\frac{\partial h}{\partial x}(x,G(x)) + \frac{\partial h}{\partial y}(x,G(x)) G'(x) \equiv 0 \mod x^{N-1},
$$
thus multiplying it with $f_2(x)$ the we get by \eqref{eq:diff_equation} that
\begin{equation}\label{eq:diff_equation_2}
f_2(x)\frac{\partial h}{\partial x}(x,G(x))  - f_1(x)G(x)\frac{\partial h}{\partial y}(x,G(x))-f_0(x) \frac{\partial h}{\partial y}(x,G(x)) \equiv 0 \mod x^{N-1}.
\end{equation}
The degree of 
\begin{equation}\label{eq:g-def}
g(x,y)=f_2(x)\frac{\partial h}{\partial x}(x,y)  - f_1(x)y\frac{\partial h}{\partial y}(x,y)-f_0(x) \frac{\partial h}{\partial y}(x,y) \in\F_p[x,y]
 \end{equation}
is $\deg g(x,y)\leq d+F$.

Let $\bar{\cS}=(\bar{s}_n)$ be an expansion sequence defined $h(x,y)$ with $\bar{s}_n=s_n$ for $0\leq n < N$. As $d^2<N$, $\bar{S}$ is unique. Then by \eqref{eq:contradiction}, \eqref{eq:diff_equation_2} and by Lemma~\ref{lemma:Diem} we get that $g(x,y)$ is a multiple of $h(x,y)$, 
\begin{equation}\label{eq:g}
 g(x,y)=c(x,y)h(x,y)
\end{equation}
for some nonzero $ c(x,y)\in \F_q[x,y]$. Comparing the degrees of $g(x,y)$ and $c(x,y)h(x,y)$ with respect to $y$, we get $c(x,y)=c(x)\in\F_q[x]$. 

We show, that $c(\alpha)\neq 0$.
Write
$$
h(x,y)=\sum_{i=0}^k r_i(x)y^i, \quad r_i(x)\in\F_p[x], \quad 0\leq i\leq k.
$$
We can assume, that $k< p$ and $r_k(x)\neq 0$.
The coefficient of $y^k$ in $c(x)h(x,y)$ is 
\begin{equation}\label{eq:y-main}
 f_2(x)r'_k(x)-kf_1(x)r_k(x)=c(x)r_k(x).
\end{equation}

If $\alpha$ is a zero of $c$, then it's a zero of $g$ by \eqref{eq:g} and thus it's a zero of $\frac{\partial h}{\partial y}$ by \eqref{eq:g-def}. As $k<p$, 
$\alpha$ is also  a zero of $r_k$.  Let $t\geq 1$ be the multiplicity of $\alpha$ in $r_k$. As $\alpha$ is a single zero of $f_2$,  its multiplicity of the left hand side of \eqref{eq:y-main} is $t$, while its  multiplicity of the right hand side is at least $t+1$, a contradiction.

Substituting $x=\alpha$ in~\eqref{eq:g}, we get
$$
c(\alpha)h(\alpha,y)=f_0(\alpha)\frac{\partial h}{\partial y}(\alpha,y)
$$
Since $c(\alpha)\neq 0$, $h(\alpha,y)$ must be zero, otherwise it cannot be a constant multiple of its derivative. Thus the minimal polynomial of $\alpha$ divides $h(x,y)$, a contradiction.
\end{proof}

Theorem~\ref{thm:seq} allows  us to control the expansion complexity of the explicit inversive generator defined by \eqref{eq:explicit_inverse}. We remark, that for $b=0$ it was shown by G\'omez-P\'erez, M\'erai and Niederreiter that the sequence has optimal expansion complexity, see \cite{merai2018expansion}. Now we deal with the general case.

\begin{corollary}\label{cor:inversive}
Let $\cS=(s_n)$ be the explicit inversive generator  defined by \eqref{eq:explicit_inverse} with $a,b\in\F_p$, $a\neq 0$. Then we have
$$
E_N(\cS)\geq cN^{1/4} \quad \text{for } 2\leq N<p
$$
for some absolute constant $c>0$.
\end{corollary}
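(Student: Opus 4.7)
The plan is to apply Theorem~\ref{thm:seq} to a truncated form of the first-order differential equation satisfied by $G_{a,b}(x)$, choosing the modulus of truncation so as to keep the parameter $F$ small.

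First I would verify the differential equation displayed before Theorem~\ref{thm:seq}. Writing $k\equiv b/a \pmod p$ with $0\le k<p$, the definition of $s_n$ gives termwise
\[
 ax G'_{a,b}(x) - b G_{a,b}(x) \;=\; \sum_{n \not\equiv k \,(\bmod\, p)} x^n \;=\; \frac{1}{1-x} - \frac{x^k}{1-x^p} \;=\; \frac{(1-x)^{p-1}-x^k}{(1-x)^p},
\]
using $1-x^p=(1-x)^p$ in $\F_p[x]$; multiplying by $(1-x)^p$ yields the stated exact equation.

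The main obstacle is that the coefficients of this exact equation have degree $\Theta(p)$, so the parameter $F$ in Theorem~\ref{thm:seq} would be too large to yield anything useful when $N<p$. To reduce the degrees I would use that modulo $x^M$ with $M\le p$ one has $(1-x)^p\equiv 1$ and $(1-x)^{p-1}\equiv\sum_{i=0}^{M-1}x^i$; multiplying the truncated congruence by $(1-x)$ then collapses the right-hand side to a polynomial of very small degree, giving
\[
 (1-x)\bigl[ax G'(x) - b G(x)\bigr] + f_0(x) \;\equiv\; 0 \pmod{x^M},
\]
with $f_0=-1$ when $M\le k$, and $f_0=-1+x^k-x^{k+1}$ when $k+2\le M\le p$. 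Setting $f_2=ax(1-x)$ and $f_1=-b(1-x)$, the hypotheses of Theorem~\ref{thm:seq} are satisfied at $\alpha=1$: indeed $f_2(1)=f_1(1)=0$, $f_2'(1)=-a\ne 0$, and $f_0(1)=-1\ne 0$ in both cases.

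The last step is a case split on the size of $k$ relative to $\sqrt N$. When $k\ge \sqrt N$, I would take $M=\min(N,k)\ge \sqrt N$ and use the first form of $f_0$, so $F=1$; Theorem~\ref{thm:seq} applied at index $M$ then yields $E_M(\cS)(E_M(\cS)+1)\ge M$ or $E_M(\cS)\ge p$, whence $E_M(\cS)\gg \sqrt M\ge N^{1/4}$. Monotonicity of the expansion complexity, $E_N(\cS)\ge E_M(\cS)$ whenever $M\le N$ (a stronger congruence modulo $x^N$ implies the weaker one modulo $x^M$), transfers this to $E_N(\cS)\gg N^{1/4}$. When $k<\sqrt N$, I would instead take $M=N$ with the second form of $f_0$: then $\deg f_0\le k+1<\sqrt N+1$, so $F<\sqrt N+1$, and Theorem~\ref{thm:seq} gives $E_N(\cS)(E_N(\cS)+F)\ge N$ or $E_N(\cS)\ge p$, from which $E_N(\cS)\gg \sqrt N\ge N^{1/4}$ follows at once. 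Finitely many small-$N$ corner cases (where the condition $\deg f_0+1<N$ of Theorem~\ref{thm:seq} fails, or where $k\le 1$) contribute only to the absolute constant in the statement.
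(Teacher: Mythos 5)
Your proposal is correct and follows essentially the same route as the paper: derive the exact first-order differential equation for $G_{a,b}$, truncate it modulo $x^M$ so that the coefficient polynomials collapse to degree $O(1)$ or degree about $b/a \bmod p$, apply Theorem~\ref{thm:seq} at $\alpha=1$ in the two resulting regimes, and bridge the intermediate range of $N$ via monotonicity of $E_N$. The only differences are cosmetic (you keep general $a$ instead of normalizing to $a=1$, and you organize the case split around $k$ versus $\sqrt{N}$ rather than $N$ versus $b$ and $b^2$), and your handling of the hypotheses of Theorem~\ref{thm:seq}, including the exclusion of $M=k+1$ where $f_0(1)$ would vanish, is sound.
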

\begin{proof}
For $b=0$ a stronger bound follows from \cite[Theorem~8]{merai2018expansion}, thus we can assume, that $b\neq 0$.

As $G_{a,b}(x)=a^{-1}G_{1,b/a}(x)$, we can assume, that $a=1$. Write $G(x)=G_{1,b}(x)$. 
Then
$$
G(x)=\sum_{\substack{n=0 \\ n\not\equiv b \bmod p}}^\infty \frac{1}{n-b}x^{n}=x^b\sum_{\substack{n=0 \\ n\not\equiv b \bmod p}}^\infty\frac{1}{n-b}x^{n-b}.
$$
Now 
\begin{equation}\label{eq:diff_1}
\left(x^{-b} G(x) \right)'=-bx^{-b-1}G(x)+x^{-b}G'(x).
\end{equation}
On the other hand
\begin{align}\label{eq:diff_2}
\left(x^{-b} G(x) \right)'
&=\left(\sum_{\substack{n=0 \\ n\not\equiv b \bmod p}}^\infty\frac{1}{n-b}x^{n-b} \right)'=\sum_{\substack{n=0 \\ n\not\equiv b \bmod p}}^\infty x^{n-b-1}=\frac{1}{x^{b+1}}\sum_{\substack{n=0 \\ n\not\equiv b \bmod p}}^\infty x^{n} \\
&=\frac{1}{x^{b+1}}\left(\sum_{\substack{n=0}}^\infty x^{n}-\sum_{\substack{n=0}}^\infty x^{pn+b} \right)=\frac{1}{x^{b+1}}\left(\frac{1}{1-x}-x^b \frac{1}{1-x^p} \right).\notag
\end{align}
Then by \eqref{eq:diff_1} and \eqref{eq:diff_2} we get
\begin{equation}\label{eq:diff}
 x(1-x)^{p+1}G'(x)-b(1-x)^{p+1}G(x)-(1-x)^{p}+x^{b}(1-x)=0.
\end{equation}
For $N\leq b$  we have
$$
 x(1-x)G'(x)-b(1-x)G(x)-1\equiv 0 \mod x^b
$$
thus by Theorem~\ref{thm:seq} we have
\begin{equation}\label{eq:inv-1}
 E_N(\cS)(E_N(\cS)+1)\geq N \quad \text{for} \quad 2\leq N\leq b. 
\end{equation}
For $N> b$  
\eqref{eq:diff} leads to
$$
 x(1-x)G'(x)-b(1-x)G(x)-1+x^{b}(1-x)\equiv 0 \mod x^p.
$$
and by Theorem~\ref{thm:seq} we get
\begin{equation}\label{eq:inv-2}
 E_N(\cS)(E_N(\cS)+b)\geq N \quad \text{for} \quad b+3\leq N\leq p-1. 
\end{equation}
If $N\ll b$, $E_N(\cS)\gg\sqrt{N}$ by ~\eqref{eq:inv-1} and  if $N\gg b^2$, we get $E_N(\cS)\gg \sqrt{N}$ by \eqref{eq:inv-2}.  
Finally, using $E_{N+1}(\cS)\geq E_{N}(\cS)$, we get $E_N(\cS)\gg \sqrt{b}$ for $b\ll N\ll b^2$ which gives the result.
\end{proof}

\begin{remark}
The proof gives the stronger bounds on expansion complexity of the explicit inversive generator $\cS_{a,b}$ with parameters $a\in\F_p^*$, $b\in \F_p$
$$
E_{N}(\cS_{a,b})\gg \sqrt{N} \quad \text{for} \quad  N\ll b \text{ or } N\gg b^2.
$$
If the parameters $(a,b)$ are chosen uniformly from $\F_p^*\times \F_p$, then it provides a square-root bound for almost all parameters $(a,b)$ which is optimal, see \cite[Theorem~1]{merai2018expansion}.
\end{remark}

In Theorem~\ref{thm:seq} we gave lower bounds on the $N$th expansion complexity of sequences whose generating function satisfies a first order differential equation \eqref{eq:diff_equation_gen}. However, we conjecture that sequences with higher order differential equation \eqref{eq:diff_equation_gen} have also large expansion complexity.

\begin{problem}\label{prob:1}
 Let $\cS=(s_n)$ be a sequence in $\F_q$ such that its generating function $G(x)$ satisfies \eqref{eq:diff_equation_gen}. Estimate the $N$th expansion complexity $E_N(\cS)$ of the sequence $\cS$ in terms of the coefficient polynomials of \eqref{eq:diff_equation_gen}.
\end{problem}

In \cite{merai2017expansion}, M\'erai, Niederreiter and Winterhof studied the connection between the expansion and linear complexity of sequences. We recall, that the $N$th linear complexity $L_N(\cS)$ of a sequence $\cS$ over a finite field $\F_q$ is zero if $s_0=\dots=s_{N-1}=0$, otherwise the least positive $L$ such that there exist $c_0,\dots, c_{L-1}\in\F_q$ such that
\begin{equation}\label{eq:linComp}
s_{n+L}=c_{L-1}s_{n+L-1}+\dots+c_0s_n, \quad 0\leq n\leq N-L-1.
\end{equation}
They proved, that large expansion complexity implies large linear complexity 
$$
L_{N}(\cS)\geq \min\left\{ E_N(\cS)-1, \frac{N+3}{2}\right\}.
$$
They also provided  a lower bound on the expansion complexity in terms of the linear complexity, however the bound also depends on the linear recurrence relation \eqref{eq:linComp}.

Here we give lower bounds on the $N$th linear complexity of sequences with \eqref{eq:diff_equation_gen} over arbitrary (i.e. not prime) finite fields. This result along with \cite{merai2017expansion} motivates Problem~\ref{prob:1}.

\begin{theorem}\label{thm:linComp}
For polynomials $f_{k+1}(x),\dots,f_{0}(x)\in \F_q[x]$ consider the differential operator $T:\F_p[[x]]\rightarrow \F_q[[x]]$,
$$
T: G(x)\mapsto   f_{k+1}(x) D^{(k)}\left(G(x)\right)+\dots+f_2(x)D^{(1)}\left(G(x)\right)+f_1(x)G(x)+f_0(x)
$$
with coprime coefficients such that it has no rational zero.
If $\cS=(s_n)_{n=0}^{\infty}$ is a sequence over $\F_q$ such that its generating function $G(x)$ satisfies
$$
T(G(x))\equiv 0 \mod x^M,
$$
then
$$
L_N(\cS)\geq \frac{N-F+2}{k+4} \quad \text{for } N\leq M.
$$
with  $F=\max\left\{\deg f_{k+1}(x),\dots, \deg f_0(x)\right\}$.
\end{theorem}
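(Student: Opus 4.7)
The plan is to translate the linear-complexity assumption into a Pad\'e-type rational approximation of the generating function $G(x)$, substitute this approximation into the operator $T$, and use the hypothesis that $T$ has no rational zero to reach a contradiction whenever $L_N(\cS)$ is too small.

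First, set $L := L_N(\cS)$. By the standard correspondence between linear complexity and rational approximation, the recurrence \eqref{eq:linComp} is equivalent to the existence of polynomials $p, q \in \F_q[x]$ with $q(0) = 1$, $\deg p \leq L - 1$, and $\deg q \leq L$ such that $q(x) G(x) \equiv p(x) \pmod{x^N}$. Equivalently, $G(x) = p(x)/q(x) + E(x)$ as a formal power series, with $E(x) \equiv 0 \pmod{x^N}$.

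Next, by induction via the Leibniz rule \eqref{eq:hasse} applied to $p = (p/q) \cdot q$, one shows that $A_r(x) := q(x)^{r+1} D^{(r)}(p(x)/q(x))$ is a polynomial whose degree is bounded linearly in $r$ and $L$. Since $D^{(r)}(E) \equiv 0 \pmod{x^{N-r}}$ for each $r$, and $T(G) \equiv 0 \pmod{x^N}$ (the latter because $N \leq M$), substituting $G = p/q + E$ into $T$ yields
\[
T(p/q) \;=\; T(G) \;-\; \sum_{r=0}^{k} f_{r+1}(x)\, D^{(r)}(E(x)) \;\equiv\; 0 \pmod{x^{N-k}}.
\]
Clearing denominators, the polynomial
\[
H(x) \;:=\; q(x)^{k+1}\, T(p/q)(x) \;=\; \sum_{r=0}^{k} f_{r+1}(x) A_r(x) q(x)^{k-r} \;+\; f_0(x) q(x)^{k+1} \;\in\; \F_q[x]
\]
has degree bounded linearly in $L$ and $F$, and satisfies $H(x) \equiv 0 \pmod{x^{N-k}}$.

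Finally, the contradiction: when $L$ is so small that $N - k$ strictly exceeds the degree bound on $H$, the polynomial $H$ must vanish identically, forcing $T(p/q) = 0$. This exhibits $p/q \in \F_q(x)$ as a rational zero of $T$, contradicting the hypothesis. Rearranging the resulting linear inequality in $L$ produces the claimed lower bound. The main technical obstacle is the inductive degree bookkeeping for $A_r$ through repeated Leibniz rules in positive characteristic---since $D^{(r)}$ is not simply $\tfrac{1}{r!}(d/dx)^r$---and tuning the constants to match the stated form $(N - F + 2)/(k + 4)$. The coprimality hypothesis on the $f_i$'s, combined with ``no rational zero,'' is what rules out the degenerate situation in which clearing denominators by $q^{k+1}$ would merely recover a trivial identity.
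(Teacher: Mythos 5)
Your proposal follows essentially the same route as the paper's proof: both convert the linear recurrence into a rational approximation $q(x)G(x)\equiv p(x) \bmod x^N$, use the Leibniz rule for Hasse derivatives to clear denominators and produce a polynomial that vanishes modulo a high power of $x$, and invoke the no-rational-zero hypothesis to force that polynomial to be nonzero so that a degree comparison gives the bound. The degree bookkeeping you defer is routine and matches the paper's (one shows $\deg\bigl(q(x)^{r+1}D^{(r)}(p/q)\bigr)\le (r+1)(L-1)$ by induction), so there is no substantive gap.
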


\begin{remark}
Theorem~\ref{thm:linComp} also holds with the standard derivative instead of the Hasse derivative. Thus one can also consider the analogue of Problem~\ref{prob:1}.  
\end{remark}

\begin{proof}
For $N\leq M$ put $L=L_N(\cS)$. Then there exist polynomials $g(x),h(x)\in\F_q[x]$, $\deg g(x)<L$, $\deg h(x)\leq L$, $h(x)\neq 0$ such that
\begin{equation}\label{eq:derivative}
h(x)G(x)\equiv g(x) \mod x^N.
\end{equation}
One can choose
$$
h(x)=\sum_{i=0}^{L-1}c_ix^{L-i}\quad \text{and} \quad g(x)=\sum_{m=0}^{L-1}\left(\sum_{\ell=L-m}^L c_\ell s_{m+\ell-L} \right)x^m,
$$
where $c_{L}=-1$ and $c_0,\dots, c_{L-1}$ are the coefficients of the linear recurrence relation  \eqref{eq:linComp}.

By the chain rule \eqref{eq:hasse}, and by \eqref{eq:derivative} we get
\begin{equation}\label{eq:derivative-k}
 h^{\ell+1}(x)D^{(\ell)}\left(G(x)\right)\equiv g_\ell(x) \mod x^{N-\ell}, \quad \deg g_\ell(x)\leq (\ell+1)(L-1), \quad 0\leq \ell\leq N. 
\end{equation}

Then multiplying $T(G(x))$ by $h^{k+1}(x)$ we get
\begin{align*}
0&\equiv T(G(x))h^{k+1}\\ 
&\equiv  f_{k+1}(x)h^{k+1}(x)D^{(k)}\left(G(x)\right)+\dots+f_1(x)h^{k+1}(x)G(x)+f_0(x)h^{k+1}(x) \\
&\equiv  f_{k+1}(x)g_k(x)+\dots+f_1(x)h^{k}(x)g_1(x)+f_0(x)h^{k+1}(x) \mod x^{N-L}
\end{align*}
Whence
$$
f_{k+1}(x)g_k(x)+\dots+f_1(x)h^{k}(x)g_1(x)+f_0(x)h^{k+1}(x)=J(x)x^{N-L}.
$$
If $J(x)=0$, then $\overline{G}(x)=g(x)/h(x)$ is a zero of $T$, as \eqref{eq:derivative-k} holds for $\overline{G}(x)$ with equality, a contradiction. 

Comparing the degrees of both sides we get
$$
\max_{0\leq \ell \leq k}\left\{\deg f_{\ell+1}(x)+ \deg g_\ell(x)+\left(k+1-\ell\right)\deg h(x)  \right\}\geq N-L
$$
which gives the result.
\end{proof}

\section*{Acknowledgement}
D. G-P. is partially supported by project MTM2014-55421-P from the Ministerio de Economia y Competitividad and L.~M. is partially supported by the Austrian Science Fund FWF Project I1751-N26 and P 31762.


\end{document}